\newtheorem*{thm}{Theorem}
\def\Closed{{\mathcal Closed}}
\def\C{{\mathcal C}}
\def\D{{\mathcal D}}
\def\Hom{\text{Hom}}
\def\id{\text{id}}
\def\F{{\mathbb F}}
\def\Vec{{\mathcal Vec}}
\def\Grp{{\mathcal Grp}}
\def\Set{{\mathcal Set}}
\def\Top{{\mathcal Top}}
\begin{document}

\title[]{Continuity is an Adjoint Functor}

\author{Edward S. Letzter}

\address{Department of Mathematics\\
        Temple University\\
        Philadelphia, PA 19122-6094}
      
      \email{letzter@temple.edu }

    \date{January 1, 2014}

\maketitle


\noindent {\bf 1. INTRODUCTION.} The emergence of category theory,
introduced by S. Eilenberg and S. Mac Lane in the 1940s
(cf.~\cite{EilenbergMacLane}), was among the most important
mathematical developments of the twentieth century. The profound
impact of the theory continues to this day, and categorical methods
are currently used, for example, in algebra, geometry, topology,
mathematical physics, logic, and theoretical computer science. (Hints
of this breadth can be found, e.g., in \cite{CoeckePaquette},
\cite{MacLane}, and \cite{Pierce}).

A \emph{category\/} is comprised of \emph{objects} and
\emph{morphisms} betweeen objects.  Standard elementary examples
include $\Set$, where the objects are sets and the morphisms are set
functions; $\Grp$, where the objects are groups and the morphisms are
group homomorphisms; and $\Top$, where the objects are topological
spaces and the morphisms are continuous functions.

Relationships among different categories are established via
\emph{functors} between them. A pair of \emph{adjoint functors}, as
formulated in 1958 by D. M. Kan \cite{Kan}, determines a particularly
close tie between two categories. Adjoint functors are essential tools
in category theory, and their introduction was a significant milestone
in its development.

While it is commonly held that adjoint functors ``occur almost
everywhere'' \cite[p.~107]{MacLane}), at least in many areas of
mathematics, the typical first examples presented to students may not
immediately reveal the fundamental importance of the ideas
involved. (One such typical example, a left adjoint to a ``forgetful
functor,'' is described at the end of the brief review provided in the
next section.)
 
Our aim in this note, then, is to illustrate how a natural example of
adjoint functors can be ``found'' in the definition of a continuous
map between topological spaces.  In particular, we show, for a set
function $\varphi:X\rightarrow Y$ between topological spaces $X$ and
$Y$, that $\varphi$ is continuous if and only if certain naturally
arising functors are adjoint.

\medskip\noindent {\bf Remarks.}  The main result presented in this
note was recorded in a more abstract, and apparently more obscure,
setting in \cite{Letzter}. Moreover, in noncommutative algebraic
geometry, certain adjoint functor pairs serve as morphisms between
(not explicitly defined) noncommutative spaces. (This approach follows
\cite{Rosenberg} and \cite{VandenBergh}; see also \cite{Smith}.)

The reader is referred for example to \cite{LawvereSchanuel} for an
accessible general introduction to category theory and its history.

\bigskip\noindent {\bf 2. A BRIEF RREVIEW.} As mentioned above, a
category $\C$ consists of \emph{objects\/} and \emph{morphisms\/}
between objects. The morphisms in $\C$ from an object $A$ to an object
$B$ comprise a set denoted $\Hom_\C(A,B)$. A morphism in $\C$ from $A$
to $B$ is also referred to as a \emph{$\C$-morphism\/} and denoted $A
\rightarrow B$.

These morphisms must satisfy the following conditions:

\medskip\noindent (1) For each pair of $\C$-morphisms $j:D\rightarrow
E$ and $k:E\rightarrow F$, there is a \emph{composition\/} morphism
$k\circ j:D\rightarrow F$, such that
\[\ell \circ (k \circ j) \quad \text{and} \text \quad (\ell \circ
k)\circ j\]
produce the same morphism $D \rightarrow G$, for all $\C$-morphisms
$j:D\rightarrow E$, $k:E\rightarrow F$, and $\ell:F\rightarrow G$.

\medskip\noindent (2) For each object of $\C$ there is an identity
morphism $\id_\C$ from that object to itself, such that the compositions
\[ A  \stackrel{\id_\C}\longrightarrow A \stackrel{f}\longrightarrow B
\quad \text{and} \quad A \stackrel{f}\longrightarrow B
\stackrel{\id_\C}\longrightarrow B \]
are both equal to $f$, for all $\C$-morphisms $f:A\rightarrow B$.

\medskip\noindent {\bf Adjoint Functors.} Let $\C$ and $\D$ be categories. A
\emph{(covariant) functor} $\Phi:\C \rightarrow \D$ assigns to each object $A$ of
$\C$ an object $\Phi(A)$ of $\D$, and to each $\C$-morphism $f:A
\rightarrow B$ a $\D$-morphism
\[\Phi(f):\Phi(A)\rightarrow \Phi(B),\]
such that
  \[ \Phi(\id_\C) = \id_\D \quad \text{and} \quad \Phi(\ell \circ k) =
  \Phi(\ell)\circ \Phi(k),\]
  for all $\C$-morphisms $k:E\rightarrow F$ and $\ell:F\rightarrow G$.

Now consider a pair of functors 
\[\Phi:\C\rightarrow \D \quad \text{and} \quad \Psi:\D\rightarrow
\C.\] 
Also suppose, for all objects $L$ of $\C$ and $M$ of $\D$, that there
exists a bijective function
  \[\Hom_\D(\Phi(L), M) \stackrel{\beta}\longrightarrow
  \Hom_\C(L, \Psi(M)),\]
  assigning to each $\D$-morphism 
\[r:\Phi(L) \rightarrow M\]
a $\C$-morphism 
\[\beta(r):L\rightarrow \Psi(M).\]
We then say that $(\Phi,\Psi)$ is an \emph{adjoint pair\/} provided
\[\beta (t \circ r) = \Psi(t) \circ \beta(r) \quad \text{and} \quad \beta (r \circ
\Phi(s)) = \beta (r) \circ s , \leqno{(\ast)}\]
for all $\C$-morphisms $s:L' \rightarrow L$ and all $\D$-morphisms
$t:M\rightarrow M'$.  We indicate that the bijection $\beta$ satisfies
the two conditions in $(\ast)$ by saying that $\beta$ is
\emph{natural in $L$ and $M$}.

\medskip\noindent{\bf Example.} The following is a standard first
example of an adjoint pair: Let $\F$ be a field, and let $\Vec_\F$
denote the category whose objects are $\F$-vector spaces and whose
morphisms are $\F$-linear transformations.  Let $\Psi:\Vec_\F
\rightarrow \Set$ be the ``forgetful functor,'' assigning to each
vector space its underlying set of vectors, and assigning to each
linear transformation its underlying set function. Let $\Phi:\Set
\rightarrow \Vec_\F$ be the functor assigning to each set $S$ the
$\F$-vector space $\F S$ with basis $S$, and assigning to each set map $S
\rightarrow S'$ the $\F$-linear extension $\F S \rightarrow \F S'$.
Then $(\Phi,\Psi)$ is an adjoint pair.  (Details and analogous
examples can be found, e.g., in Chapter IV of \cite{MacLane}.)

\bigskip\noindent {\bf 3. CONTINUITY VIA ADJOINT PAIRS.} For the
remainder, let $X$ and $Y$ be topological spaces, and let
$\varphi:X\rightarrow Y$ be a set function. Recall that $\varphi$ will
be continuous if and only if $\varphi^{-1}(V)$ is closed in $X$ for
all closed subsets $V$ of $Y$.

\medskip\noindent{\bf The category of closed subsets of a topological
  space.} Define $\Closed (X)$ to be the category whose objects are
closed subsets of $X$ and whose morphisms are described as follows:
Let $U$ and $U'$ be closed subsets of $X$. If $U$ is a subset of $U'$
then there is exactly one morphism, the inclusion map, from $U$ to
$U'$. If $U$ is not a subset of $U'$ then the set of morphisms from
$U$ to $U'$ is empty. Similarly define $\Closed (Y)$.

\medskip\noindent{\bf An adjointness criterion.} Adjoint pairs of
functors between the categories $\Closed(X)$ and $\Closed(Y)$ can be
described in a particularly simple way, as follows: Suppose that
\[\Phi:\Closed(X) \rightarrow \Closed(Y) \quad \text{and} \quad
\Psi:\Closed(Y) \rightarrow \Closed(X)\]
are functors. Then for $U$ in $\Closed (X)$ and $V$ in $\Closed(Y)$, there
exists a bijection
  \[ \Hom_{\Closed(Y)}(\Phi(U),V) \quad
  \stackrel{\beta}\longrightarrow \quad
  \Hom_{\Closed(X)}(U,\Psi(V)) \]
  exactly when one (and only one) of the following two cases holds: 
\medskip

\begin{description}
\item[{\rm Case 1}] Both $\Phi(U) 
  \subseteq V$ and $V \subseteq \Psi(V)$
\medskip

\item[{\rm Case 2}] Both $\Phi(U) \nsubseteq V$ and $U \nsubseteq \Psi(V)$
\end{description}
\medskip

\noindent It is not hard to verify that if a bijection $\beta$ as
above does exist, then it must be natural in $U$ and $V$, in the sense
of $(\ast)$. (Also note that if $\beta$ exists it must be unique.) We
deduce that $(\Phi,\Psi)$ is an adjoint pair exactly when the
statement
\[\Phi(U) \subseteq V \quad \text{if and only if} \quad U \subseteq
\Psi(V) \leqno{(\dag)} \]
holds true for all $U$ in $\Closed (X)$ and $V$ in $\Closed(Y)$. 

\medskip\noindent{\bf From a function to a pair of functors.}
Consider the assignments
\[T_\varphi : \Closed(X) \; \longrightarrow \; \Closed(Y), \quad U
\longmapsto \overline{\varphi(U)},\]
and
\[T^\varphi : \Closed(Y) \; \longrightarrow \; \Closed(X), \quad V
\longmapsto \overline{\varphi^{-1}(V)},\]
where $\overline{S}$ denotes the closure of an arbitrary subset of $X$
or $Y$. It is straightforward to check that $T_\varphi$ and $T^\varphi$ are
functors. 

Our aim now is to prove:

\begin{thm} The function $\varphi$ is continuous if and only if
  $(T_\varphi, T^\varphi)$ is an adjoint pair. \end{thm}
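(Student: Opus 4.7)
The plan is to use the adjointness criterion $(\dag)$ established just before the theorem, which reduces the claim to proving that $\varphi$ is continuous if and only if
\[ \overline{\varphi(U)} \subseteq V \quad \Longleftrightarrow \quad U \subseteq \overline{\varphi^{-1}(V)} \]
for every closed $U \subseteq X$ and every closed $V \subseteq Y$.

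First I would simplify the left-hand inclusion. Since $V$ is closed, $\overline{\varphi(U)} \subseteq V$ holds exactly when $\varphi(U) \subseteq V$, which in turn holds exactly when $U \subseteq \varphi^{-1}(V)$. Thus $(\dag)$ for the pair $(T_\varphi, T^\varphi)$ is equivalent to
\[ U \subseteq \varphi^{-1}(V) \quad \Longleftrightarrow \quad U \subseteq \overline{\varphi^{-1}(V)}, \]
for all closed $U$ in $X$ and closed $V$ in $Y$.

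For the forward direction, if $\varphi$ is continuous then $\varphi^{-1}(V)$ is itself closed whenever $V$ is, so $\varphi^{-1}(V) = \overline{\varphi^{-1}(V)}$ and the displayed equivalence is immediate. For the converse, I assume $(\dag)$ holds and fix an arbitrary closed $V \subseteq Y$; I then apply the equivalence to the test object $U := \overline{\varphi^{-1}(V)}$, which is a closed subset of $X$. The trivial inclusion $U \subseteq \overline{\varphi^{-1}(V)}$ forces $U \subseteq \varphi^{-1}(V)$, yielding $\overline{\varphi^{-1}(V)} \subseteq \varphi^{-1}(V)$, so $\varphi^{-1}(V)$ is closed. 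Since $V$ was arbitrary, $\varphi$ is continuous.

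There is no substantial obstacle here; the main subtlety is simply identifying the right test object in the converse direction, namely choosing $U$ to be the closure $\overline{\varphi^{-1}(V)}$ itself so that the hypothesized equivalence collapses into the statement that $\varphi^{-1}(V)$ equals its own closure. The rest is routine manipulation of closures and preimages together with the observation that inclusions into a closed set are unaffected by taking closures on the left.
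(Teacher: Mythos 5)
Your proof is correct and follows essentially the same route as the paper: reduce via the criterion $(\dag)$, discard the closure over $\varphi(U)$ because $V$ is closed, observe that one implication is automatic, and in the converse direction test with $U := \overline{\varphi^{-1}(V)}$ to force $\varphi^{-1}(V)$ to equal its closure. Your one small streamlining --- rewriting $\varphi(U) \subseteq V$ as $U \subseteq \varphi^{-1}(V)$ so that both sides of the equivalence become inclusions of $U$ --- is a tidy touch but does not change the substance of the argument.
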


\begin{proof} To start, we claim that the following four conditions
  are equivalent, for all $U$ in $\Closed(X)$ and $V$ in $\Closed (Y)$:
\medskip

\begin{enumerate}

\item \label{equivalenceone} \quad $(T_\varphi, T^\varphi)$ \; is an adjoint pair.
\medskip

\item \label{equivalencetwo} \quad $ T_\varphi(U) \subseteq V$ \; if and only if \; $U \subseteq
T^\varphi(V)$.
\medskip

\item \label{equivalencethree} \quad $\overline{\varphi(U)} \subseteq
  V$ \; if and only if \; $U \subseteq \overline{\varphi^{-1}(V)}$.
  \medskip

\item \label{equivalencefour} \quad $\varphi(U) \subseteq V$ \; if and
  only if \; $U \subseteq \overline{\varphi^{-1}(V)}$.
\end{enumerate}
\medskip

The equivalence of (\ref{equivalenceone}), (\ref{equivalencetwo}), and
(\ref{equivalencethree}) follows directly from
$(\dag)$. To see why (\ref{equivalencethree}) is
equivalent to (\ref{equivalencefour}), recall that the closure of
$\varphi(U)$ in $Y$ is the smallest closed subset 
containing $\varphi(U)$, and so
\[\varphi(U) \subseteq V \quad \text{if and only if} \quad  \overline{\varphi(U)}
\subseteq V.\]

Next, it is also true, for all $U$ in $\Closed(X)$ and $V$ in
$\Closed(Y) $, that if $\varphi(U) \subseteq V$ then
\[U \; \subseteq \; \varphi^{-1}(\varphi(U)) \; \subseteq \; \varphi^{-1}(V)
\; \subseteq \; \overline{\varphi^{-1}(V)}.\]
Hence, it follows from the equivalence of (\ref{equivalenceone}) and
(\ref{equivalencefour}) above that $(T_\varphi, T^\varphi)$ is an
adjoint pair exactly when
\[ U \; \subseteq \; \overline{\varphi^{-1}(V)} \; \Longrightarrow \;
\varphi(U) \; \subseteq V , \leqno{(\ddag)}\]
for all $U$ in $\Closed(X)$ and $V$ in $\Closed(Y)$.

Now suppose that $\varphi$ is a continuous function. As noted above,
$\varphi^{-1}(V)$ is closed in $X$ for all closed subsets $V$ of $Y$,
and so
\[ \varphi^{-1}(V) = \overline{\varphi^{-1}(V)}.\]
Therefore, for all $U$ in $\Closed(X)$ and $V$ in $\Closed(Y)$, if
\[ U \; \subseteq \; \varphi^{-1}(V) \; = \; \overline{\varphi^{-1}(V)},\]
then
\[ \varphi(U) \; \subseteq \; \varphi(\varphi^{-1}(V)) \; \subseteq \; V.\]
Consequently, $(T_\varphi,T^\varphi)$ is an adjoint pair.

Conversely, suppose that $(T_\varphi,T^\varphi)$ is an adjoint pair,
and fix an arbitrary $V$ in $\Closed(Y)$. Set
\[ U \; := \; \overline{\varphi^{-1}(V)}.\]
By $(\ddag)$, 
\[ \varphi(U) \; \subseteq V, \; \]
and so
\[ \overline{\varphi^{-1}(V)} \; = \; U \subseteq
\; \varphi^{-1}(\varphi(U)) \; \subseteq \; \varphi^{-1}(V).\]
Therefore,
\[ \varphi^{-1}(V) \; = \; \overline{\varphi^{-1}(V)}.\]
In particular, $\varphi^{-1}(V)$ is closed, and so $\varphi$ is
continuous. The theorem follows. 
\end{proof}

\medskip\noindent{\bf Acknowledgment.} I am grateful to the referees
for their helpful comments.


\end{document}